\newcommand{\lyxaddress}[1]{
	\par {\raggedright #1
	\vspace{1.4em}
	\noindent\par}
}
\newtheorem{theorem}{Theorem}
\newtheorem{proposition}[theorem]{Proposition}
\newtheorem*{lemma*}{Lemma}
\theoremstyle{remark}
\newtheorem{remark}[theorem]{Remark}
\newtheorem*{remark*}{Remark}
\newtheorem*{remarks*}{Remarks}
\newtheorem*{example*}{Example}
\newtheorem*{question*}{QUESTION}
\newtheorem*{conjecture*}{CONJECTURE}
\theoremstyle{definition}
\newtheorem*{definition*}{Definition}
\newtheorem*{notation*}{Notation}
\newcommand{\diag}{\mathop\mathrm{diag}\nolimits}
\newcommand{\Hn}{\mathop\mathrm{Hn}\nolimits}
\newcommand{\Hl}{\mathop\mathrm{H\ell}\nolimits}
\begin{document}
\title{A formula for the local Heun solution}
\author{Pavel \v{S}\v{t}ov\'\i\v{c}ek}
\maketitle

\lyxaddress{\hskip2.6em\emph{ Department of Mathematics, Faculty of Nuclear
Science, Czech Technical}\\
\hskip2.9em \emph{University in Prague, Trojanova 13, 120~00 Praha,
Czech Republic}}
\begin{abstract}
\noindent The local Heun solution is the unique solution to Heun's
equation which is analytic in the unit disk centered at $0\in\mathbb{C}$
and taking the value $1$ at the center of the disk. In this paper,
as an application of the theory of orthogonal polynomials, we are
able to express the coefficients in the corresponding power series
as finite multiple sums. In addition, the obtained formula can be
used to derive an explicit estimate on the coefficients giving a hint
on their asymptotic behavior for large indices.
\end{abstract}
\noindent \begin{flushleft}
\emph{Keywords}: Heun's equation; orthogonal polynomials; Jacobi matrix;
Green function\emph{}\\
\emph{MSC codes}: 33E30; 42C05; 47B36; 30D15
\par\end{flushleft}

\section{Introduction}

Heun's equation is an analogue of the Gauss hypergeometric equation.
Both equations are second-order Fuchsian differential equations. While
the latter equation has three regular singular points, the former
equation has four such points. This article deals with the local Heun
solution which is a special function solving Heun's equation. Regarding
the respective history, terminology and notation we refer to the monograph
 \cite{Ronveaux}. One can also consult \cite[Sec.~15.3]{Erdelyi_etal}
or \cite[Chp.~31]{DLMF}.

Despite the long history since the seminal paper by Heun \cite{Heun}
the study of Heun's equation is far of being complete and suggests
some profound challenges. One of them is a more explicit description
of solutions to Heun's equation. This is in sharp contrast to the
hypergeometric equation. The principal complication in the case of
Heun's equation is caused by the extra singularity.

From the general theory of Fuchsian differential equations we know
that the general solution to Heun's equation in a neighborhood of
a regular singularity can be expressed as a linear combination of
two Frobenius series, and the coefficients in those series are generated
by a three-term recurrence \cite{Ronveaux}. To the best of author's
knowledge, there is a very limited progress in the attempt to describe
the coefficients more explicitly. In this respect we are aware just
of the paper \cite{Choun}.

An alternative attempt consists in expanding a solution as a series
in terms of simpler special functions, namely the hypergeometric functions
\cite{Ronveaux}. This approach has been developed in a number of
papers. Here we just mention the comparatively recent contribution
of this kind \cite{MelikdzhanianIshkhanyan} and the references therein.

One can also use the fact that Heun's equation is closely related
to the theory of orthogonal polynomials. For instance, some families
of orthogonal polynomials satisfy Heun\textquoteright s equation \cite{MagnusNdayiragijeRonveaux}.
Even more importantly, as emphasized by Valent in \cite{Valent-WorldSciPubl},
generating functions for some classes of orthogonal polynomials satisfy
Heun\textquoteright s equation. As an example of this relationship
let us mention the paper \cite{GoginHirvensalo}. In the present paper,
we apply this approach and combine it with some earlier results on
orthogonal polynomials presented in \cite{PS-JAT,PS-JAT2} in order
to obtain an explicit description of the coefficients in the Frobenius
series.

This article is organized as follows. In Section 2, we recall some
basic facts concerning Heun's equation which are relevant in the remainder
of the paper. Section 3 is devoted to the Green function of a Jacobi
matrix. An auxiliary result is proven here that turned out to be of
importance in the derivation of the sought formula for the coefficients
in the local Heun solution. The formula itself is proven in Section
4. As an application, the formula is used to derive an explicit estimate
on the coefficients. In Section 5, a sub-family of the local Heun
solutions is considered. In this particular case, we prove another
formula to hold. This formula is similar in nature but not identical
to that presented in Section 4.

\section{Preliminaries}

The canonical form of Heun's equation is
\begin{equation}
\frac{\text{d}^{2}F(z)}{\text{d}z^{2}}+\left(\frac{\gamma}{z}+\frac{\delta}{z-1}+\frac{\epsilon}{z-a}\right)\!\frac{\text{d}F(z)}{\text{d}z}+\frac{\alpha\beta z-q}{z(z-1)(z-a)}\,F(z)=0.\label{eq:Heun-eq-R}
\end{equation}
It depends on seven, generally complex, parameters $\alpha$, $\beta$,
$\gamma$, $\delta$, $\epsilon$, $a$, $q$, but only six of them
are independent. The singularities are located at the points $0$,
$1$, $a$ and $\infty$, and all of them are regular. The characteristic
exponents are respectively
\[
(0,1-\gamma),\ (0,1-\delta),\ (0,1-\epsilon)\ \text{and}\ (\alpha,\beta).
\]
One usually assumes that $|a|>1$. The parameters are subject to the
condition
\[
\gamma+\delta+\epsilon=\alpha+\beta+1,
\]
in order to guarantee regularity of the singularity at $\infty$.
The parameter $q$ is called the \emph{accessory parameter}.

Assuming that
\[
\gamma\neq0,-1,-2,\ldots,
\]
there exists, up to a constant multiplier, a unique Frobenius solution
at $z=0$ with the characteristic exponent $0$. It has the form of
a power series
\begin{equation}
F(z)=\sum_{n=0}^{\infty}c_{n}z^{n},\ \,c_{0}\neq0.\label{eq:heun-loc-ser}
\end{equation}
Under the assumption that $|a|>1$ the power series converges in the
disk $D_{1}$,
\[
D_{1}:=\{z\in\mathbb{C};\,|z|<1\}.
\]
The coefficients $c_{n}$ are determined by the three-term recurrence
\begin{eqnarray}
 & -qc_{0}+a\gamma c_{1}\,=\,0,\label{eq:Heun-recurr-0}\\
\noalign{\smallskip} & A_{n}c_{n-1}-(B_{n}+q)c_{n}+C_{n}c_{n+1}\,=\,0 & \text{for}\ n\geq1,\label{eq:Heun-recurr-1}
\end{eqnarray}
where
\begin{eqnarray}
A_{n} & := & (n-1+\alpha)(n-1+\beta),\nonumber \\
B_{n} & := & n\big((n-1+\gamma)(1+a)+a\delta+\epsilon\big),\label{eq:AnBnCn-orig}\\
C_{n} & := & (n+1)(n+\gamma)a.\nonumber 
\end{eqnarray}
To simplify the notation one frequently puts by convention $c_{-1}:=0$.

To remove the ambiguity of an arbitrary multiplier the solution (\ref{eq:heun-loc-ser})
is normalized by requiring $c_{0}=1$, that is $F(0)=1$. In \cite{Ronveaux},
this solution is called \emph{the local Heun solution} and is denoted
\[
\Hl(a,q;\alpha,\beta,\gamma,\delta;z).
\]

Following \cite{Valent-WorldSciPubl} we use the substitution
\begin{equation}
a:=\frac{1}{k^{2}}\,,\ q:=-\frac{s}{k^{2}}\,,\label{eq:a-q-Valent}
\end{equation}
with $k$ being restricted to the range $0<k<1$. Equation (\ref{eq:Heun-eq-R})
then takes the form
\begin{equation}
\frac{\text{d}^{2}F(z)}{\text{d}z^{2}}+\left(\frac{\gamma}{z}-\frac{\delta}{1-z}-\frac{\epsilon k^{2}}{1-k^{2}z}\right)\!\frac{\text{d}F(z)}{\text{d}z}+\frac{s+\alpha\beta k^{2}z}{z\,(1-z)(1-k^{2}z)}\,F(z)=0.\label{eq:Heun-eq-V}
\end{equation}
In \cite{Valent-WorldSciPubl}, the local Heun solution is then denoted
as 
\[
\Hn(k^{2},s;\alpha,\beta,\gamma,\delta;z):=\Hl\!\bigg(\frac{1}{k^{2}},-\frac{s}{k^{2}};\alpha,\beta,\gamma,\delta;z\bigg).
\]

The following facts are well known, see~\cite{Valent-WorldSciPubl}.
The power series (\ref{eq:heun-loc-ser}) can be rewritten in the
the form
\begin{equation}
\Hn(k^{2},s;\alpha,\beta,\gamma,\delta;z)=\sum_{n=0}^{\infty}(-1)^{n}\sqrt{\frac{\lambda_{0}\lambda_{1}\dots\lambda_{n-1}}{\mu_{1}\mu_{2}\dots\mu_{n}}}\,P_{n}(s+k^{2}\alpha\beta)z^{n},\label{eq:gen_func_onp}
\end{equation}
where $\{P_{n};\,n\geq0\}$ is a polynomial sequence fulfilling the
three-term recurrence
\[
a_{n-1}P_{n-1}(x)+(b_{n}-x)P_{n}(x)+a_{n}P_{n+1}(x)=0,\quad n\geq0,
\]
with the initial condition $P_{-1}(x)=0$, $P_{0}(x)=1$, and
\begin{equation}
a_{n}:=\sqrt{\lambda_{n}\mu_{n+1}}\,,\ \,b_{n}:=\lambda_{n}+\mu_{n}+\gamma_{n},\label{eq:alpha-beta-llbd-mu-gamma}
\end{equation}
\begin{equation}
\lambda_{n}:=k^{2}(n+\alpha)(n+\beta),\ \,\mu_{n}:=n\,(n+\gamma-1),\ \,\gamma_{n}:=(1-k^{2})\delta n.\label{eq:lbd-mu-gamma}
\end{equation}
For $\alpha,\beta,\gamma>0$, $\delta\in\mathbb{R}$, and $0<k<1$,
the polynomials $P_{n}$ are known to be \emph{orthonormal} with respect
to a unique probability measure on the real line, that is $\{P_{n}\}$
is an orthonormal polynomial sequence with the corresponding Hamburger
moment problem being determinate.

Following \cite{Valent-WorldSciPubl}, let us make yet another substitution
\begin{equation}
s:=w-k^{2}\alpha\beta.\label{eq:s-Valent}
\end{equation}
Then equation (\ref{eq:Heun-eq-V}) becomes
\begin{equation}
\frac{\text{d}^{2}F(z)}{\text{d}z^{2}}+\left(\frac{\gamma}{z}-\frac{\delta}{1-z}-\frac{\epsilon k^{2}}{1-k^{2}z}\right)\!\frac{\text{d}F(z)}{\text{d}z}+\frac{w-k^{2}\alpha\beta\,(1-z)}{z(1-z)(1-k^{2}z)}\,F(z)=0.\label{eq:Heun-eq-G-1}
\end{equation}
Hereafter, we deal with Heun's equation exactly in this form and write
\[
F(z):=\Hl\!\Big(\,\frac{1}{k^{2}},-\frac{w-k^{2}\alpha\beta}{k^{2}};\alpha,\beta,\gamma,\delta;z\Big).
\]

\section{The Green function}

\subsection{Generalities}

Let us recall the concept of the Green function related to a Jacobi
matrix \cite{Geronimo} that is substantial for our purposes. By a
Jacobi matrix $J$, we shall understand a real symmetric tridiagonal
semi-infinite matrix whose subdiagonal contains nonzero entries only.
We can even assume, without loss of generality, that the entries on
the subdiagonal are all positive. The sequences of entries on the
subdiagonal and the diagonal will be denoted $\{a_{n};\,n\geq0\}$
and $\{b_{n};\,n\geq0\}$, respectively. Thus $J$ has the form 
\[
J=\begin{pmatrix}b_{0} & a_{0}\\
a_{0} & b_{1} & a_{1}\\
 & a_{1} & b_{2} & \ddots\\
 &  & \ddots & \ddots
\end{pmatrix}.
\]
We shall refer to the sequences $\{a_{n}\}$ and $\{b_{n}\}$ as the
\emph{Jacobi parameters}.

The Green function of $J$ is a strictly lower triangular matrix $G$
which is a right inverse of $J$, that is
\[
JG=I\ \,(I\ \text{standing for the unit matrix}).
\]
Here we make use of the fact that the multiplication of band semi-infinite
matrices is well defined (in the purely algebraic sense). Let $J'$
be the matrix obtained from $J$ by removing the first column, and
$G'$ be the matrix obtained from $G$ by removing the first row (which
is just the zero row vector). Then $J'$ and $G'$ are both lower
triangular semi-infinite matrices, all entries on the diagonal of
$J'$ are positive, and $J'G'=I$. It readily follows that $G'=(J')^{-1}$,
with the inverse being well defined and unique. Hence the Green function
is defined unambiguously.

Recall, too, that to every Jacobi matrix $J$ there corresponds a
unique orthonormal polynomial sequence $\{P_{n}(x);\,n\geq0\}$ normalized
by requiring that $P_{0}(x)=1$. The polynomial sequence is determined
by a three-term recurrence which can be written in a compact form
as
\[
J\bm{P}(x)=x\,\bm{P}(x),
\]
where $\bm{P}(x)$ is a column vector,
\[
\bm{P}(x):=\big(P_{0}(x),P_{1}(x),P_{2}(x),\dots\,\big)^{T}.
\]
The usefulness of the Green function consists in the identity \cite[Theorem 6]{PS-JAT}
\begin{equation}
\bm{P}(x)=(I-xG)^{-1}\bm{P}(0).\label{eq:Px-P0}
\end{equation}
Here the inverse $(I-xG)^{-1}$ is well defined by the geometric series.
Moreover, one can express the Green function in terms of the values
$P_{n}(0)$, $n\geq0$, namely
\begin{equation}
G_{m,n}=\bigg(\,\sum_{j=n}^{m-1}\frac{1}{a_{j}P_{j}(0)P_{j+1}(0)}\,\bigg)P_{m}(0)P_{n}(0)\ \,\text{for}\ m>n\label{eq:Green-Pn0}
\end{equation}
(and $G_{m,n}=0$ for $m\leq$n) \cite[Eqs. (27),(29)]{PS-JAT}.

This means that provided the values $\{P_{n}(0)\}$ are known one
can derive a formula for all members $P_{n}(x)$ of the orthonormal
polynomial sequence. This approach has been already heavily exploited
in the follow up papers \cite{PS-JAT2,PS-Swiderski}.

A particularly important example of Jacobi matrices for which the
values $\{P_{n}(0)\}$ are well known are those Jacobi matrices which
are related to generators of Birth-Death processes. In that case 
\begin{equation}
a_{n}=\sqrt{\lambda_{n}\mu_{n+1}}\,,\ b_{n}=\lambda_{n}+\mu_{n},\label{eq:alpha-beta-llbd-mu}
\end{equation}
where $\{\lambda_{n}\}$ and $\{\mu_{n}\}$ are sequences fulfilling
$\lambda_{n}>0$, $\mu_{n+1}>0$ for all $n\geq0$, and $\mu_{0}=0$.
Then
\begin{equation}
P_{n}(0)=(-1)^{n}\,\sqrt{\frac{\lambda_{0}\lambda_{1}\ldots\lambda_{n-1}}{\mu_{1}\mu_{2}\ldots\mu_{n}}}\,,\ n\geq0.\label{eq:Pn0-birth-death}
\end{equation}

This equation is not difficult to verify. We can refer, for instance,
to the classical paper \cite[\S II.1]{Karlin_McGregor} where the
formula is already contained in somewhat different notation. In fact,
in this reference the even more general case with $\mu_{0}>0$ is
treated. A detailed discussion of this more general case is also provided
in \cite[Lemma 3.1]{PS-Swiderski}.

\subsection{The particular case $\delta=0$\label{sec:Fz-dlt-0}}

Comparing (\ref{eq:alpha-beta-llbd-mu}) to (\ref{eq:alpha-beta-llbd-mu-gamma})
we see that formula (\ref{eq:Pn0-birth-death}) could be used if we
had\linebreak{}
$\gamma_{n}=0$. Therefore, taking into account (\ref{eq:lbd-mu-gamma}),
we temporarily restrict ourselves to a five-parameter subfamily of
the local Heun solutions by letting
\[
\delta=0.
\]
Then, in view of (\ref{eq:lbd-mu-gamma}), (\ref{eq:alpha-beta-llbd-mu})
and (\ref{eq:Pn0-birth-death}),
\begin{equation}
a_{n}=k\sqrt{(n+1)(n+\alpha)(n+\beta)(n+\gamma)}\,,\ b_{n}=k^{2}(n+\alpha)(n+\beta)+n\,(n+\gamma-1),\label{eq:alpha-beta-delta-0}
\end{equation}
and
\begin{equation}
P_{n}(0)=(-1)^{n}k^{n}\,\sqrt{\frac{(\alpha)_{n}(\beta)_{n}}{n!(\gamma)_{n}}}\,.\label{eq:Pn0-dlt-0}
\end{equation}
Here and everywhere in what follows, $(x)_{n}$ denotes the Pochhammer
symbol,
\[
(x)_{n}:=x\,(x+1)(x+2)\cdots(x+n-1)
\]
for a non-negative integer $n$.

Using (\ref{eq:Green-Pn0}) and (\ref{eq:Pn0-birth-death}) we obtain,
in a straightforward way, the following result.

\begin{proposition} The Green function for the Jacobi parameters
$a_{n}$, $b_{n}$ given in (\ref{eq:alpha-beta-delta-0}) reads
\begin{equation}
G_{m,n}=(-1)^{m+n+1}k^{m+n-2}\sqrt{\frac{(\alpha)_{m}(\alpha)_{n}(\beta)_{m}(\beta)_{n}}{m!n!(\gamma)_{m}(\gamma)_{n}}}\,\sum_{j=n}^{m-1}k^{-2j}\,\frac{j!(\gamma)_{j}}{(\alpha)_{j+1}(\beta)_{j+1}},\ \,m>n\geq0,\label{eq:G-dlt-0}
\end{equation}
and $G_{m,n}=0$ for $0\le m\le n$. \end{proposition}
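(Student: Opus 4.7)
The plan is to obtain this formula by direct substitution into the general identity (\ref{eq:Green-Pn0}). Since $\delta=0$ reduces the birth-death coefficients (\ref{eq:lbd-mu-gamma}) to the form $b_n=\lambda_n+\mu_n$ required by (\ref{eq:alpha-beta-llbd-mu}), with $\mu_0=0$ and $\lambda_n,\mu_{n+1}>0$ under the standing positivity assumptions on $\alpha,\beta,\gamma,k$, the classical formula (\ref{eq:Pn0-birth-death}) applies and specializes, via $\lambda_n=k^{2}(n+\alpha)(n+\beta)$ and $\mu_n=n(n+\gamma-1)$, to the expression (\ref{eq:Pn0-dlt-0}). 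So the data on the right-hand side of (\ref{eq:Green-Pn0}) are fully explicit and the proof is a calculation.

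First I would factor out the prefactor $P_m(0)P_n(0)$ from (\ref{eq:Green-Pn0}): using (\ref{eq:Pn0-dlt-0}),
\[
P_m(0)P_n(0)=(-1)^{m+n}k^{m+n}\sqrt{\frac{(\alpha)_{m}(\alpha)_{n}(\beta)_{m}(\beta)_{n}}{m!\,n!\,(\gamma)_{m}(\gamma)_{n}}},
\]
which already reproduces the prefactor in (\ref{eq:G-dlt-0}) up to the sign $(-1)^{m+n+1}$ and the power $k^{m+n-2}$ that will be accounted for in the next step.

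Next I would simplify a generic summand $1/\bigl(a_{j}P_{j}(0)P_{j+1}(0)\bigr)$. Using (\ref{eq:alpha-beta-delta-0}) and (\ref{eq:Pn0-dlt-0}), the product under the square root inside $P_{j}(0)P_{j+1}(0)$ combines with the factors $(j+1)(j+\alpha)(j+\beta)(j+\gamma)$ coming from $a_{j}^{\,2}/k^{2}$ exactly so that the Pochhammer identities $(\alpha)_{j+1}=(\alpha)_{j}(j+\alpha)$, $(\beta)_{j+1}=(\beta)_{j}(j+\beta)$, $(\gamma)_{j+1}=(\gamma)_{j}(j+\gamma)$, and $(j+1)!=(j+1)\,j!$ turn the radical into a perfect square. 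Specifically, a short computation yields
\[
a_{j}P_{j}(0)P_{j+1}(0)=-k^{2j+2}\,\frac{(\alpha)_{j+1}(\beta)_{j+1}}{j!\,(\gamma)_{j}},
\]
so that the reciprocal is $-k^{-2j-2}\,j!\,(\gamma)_{j}/[(\alpha)_{j+1}(\beta)_{j+1}]$. Multiplying by the prefactor above, the overall $(-1)^{m+n+1}$ sign appears from the extra minus, and the two remaining factors of $k^{-2}$ combine with $k^{m+n}$ to give $k^{m+n-2}$, matching (\ref{eq:G-dlt-0}) exactly.

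There is no substantive obstacle; the whole verification is bookkeeping of signs, powers of $k$, and Pochhammer/factorial shifts. The one small point demanding attention is confirming that the radical obtained when combining $a_{j}$ with the two Pochhammer square roots in $P_{j}(0)$ and $P_{j+1}(0)$ is in fact rational, i.e.\ that every factor under the root appears with an even exponent; this is the only place where a miscount would be easy to make, and it is precisely what makes the telescoping structure of (\ref{eq:Green-Pn0}) usable. The case $m\le n$ is trivial by the convention that the sum is empty.
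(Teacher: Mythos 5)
Your proposal is correct and is exactly the paper's route: the paper gives no further detail beyond stating that the Proposition follows ``in a straightforward way'' from (\ref{eq:Green-Pn0}) and (\ref{eq:Pn0-birth-death}), and your computation of $a_{j}P_{j}(0)P_{j+1}(0)=-k^{2j+2}(\alpha)_{j+1}(\beta)_{j+1}/\bigl(j!\,(\gamma)_{j}\bigr)$ together with the prefactor $P_{m}(0)P_{n}(0)$ is precisely that bookkeeping, with all signs, powers of $k$, and Pochhammer shifts checking out.
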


These results can be applied in the theory of the Heun functions.
Combining (\ref{eq:gen_func_onp}), (\ref{eq:s-Valent}) and (\ref{eq:Pn0-birth-death})
we get an expression for the local Heun solution,
\[
F(z)=\Hl\!\bigg(\frac{1}{k^{2}},-\frac{w}{k^{2}}+\alpha\beta;\alpha,\beta,\gamma,0;z\bigg)=\sum_{n=0}^{\infty}c_{n}z^{n}\ \,\text{where}\ c_{n}=P_{n}(0)P_{n}(w),
\]
and, in regard of (\ref{eq:Px-P0}),
\begin{eqnarray*}
c_{n} & = & \sum_{m=0}^{n}P_{n}(0)\big(G^{m}\bm{P}(0)\big)_{\!n}w^{m}\\
 & = & \sum_{m=0}^{n}\,\sum_{0\leq\ell_{1}<\ell_{2}<\,\ldots\,<\ell_{m}<n}\,P_{n}(0)G_{n,\ell_{m}}G_{\ell_{m},\ell_{m-1}}\cdots G_{\ell_{2},\ell_{1}}P_{\ell_{1}}(0)w^{m}.
\end{eqnarray*}
Here the last sum should be understood so that the summation runs
over all integer indices $\ell_{1},\ldots,\ell_{m}$ satisfying the
indicated inequalities. Substituting from (\ref{eq:Pn0-dlt-0}) and
(\ref{eq:G-dlt-0}) we obtain the formula
\begin{equation}
c_{n}=\frac{(\alpha)_{n}(\beta)_{n}}{n!(\gamma)_{n}}\,\bigg(k^{2n}+\sum_{m=1}^{n}g_{m,n}w^{m}\bigg)\label{eq:cn-dlt-0}
\end{equation}
where
\begin{equation}
g_{m,n}=(-1)^{m}k^{2n-2m}\,\sum_{0\leq\ell_{1}\leq j_{1}<\ell_{2}\leq j_{2}<\,\ldots\,<\ell_{m}\leq j_{m}<n}\,\,\prod_{s=1}^{m}\frac{j_{s}!\,(\gamma)_{j_{s}}(\alpha)_{\ell_{s}}(\beta)_{\ell_{s}}}{\ell_{s}!\,(\gamma)_{\ell_{s}}(\alpha)_{j_{s}+1}(\beta)_{j_{s}+1}}\,k^{-2(j_{s}-\ell_{s})}.\label{eq:gmn-dlt-0}
\end{equation}

\subsection{A diagonal perturbation of a Jacobi matrix}

Naturally, we wish to extend the formula (\ref{eq:cn-dlt-0}), (\ref{eq:gmn-dlt-0})
to the case of a general parameter $\delta$. Let $\check{J}$ refer
to the Jacobi matrix for a general $\delta$ and put $J:=\check{J}\big|_{\delta=0}$.
Then from the comparison of (\ref{eq:alpha-beta-llbd-mu-gamma}) to
(\ref{eq:alpha-beta-llbd-mu}) it is seen that $\check{J}-J$ is a
diagonal matrix,
\begin{equation}
D:=\check{J}-J=(1-k^{2})\delta\diag(0,1,2,\ldots).\label{eq:Jcheck-J}
\end{equation}
Fortunately, in such a case the respective Green functions are related
by a comparatively simple algebraic equation. We have the following
general result.

\begin{theorem} Let $J$ be a Jacobi matrix and $D$ be a real diagonal
semi-infinite matrix. Put
\[
\check{J}:=J+D.
\]
Let $G$ and $\check{G}$ be the Green functions for $J$ and $\check{J}$,
respectively. Denote by $\bm{P}(x)$ and $\check{\bm{P}}(x)$ the
orthonormal polynomial sequences associated with $J$ and $\check{J}$,
respectively, both being arranged into column vectors. Then
\begin{equation}
\check{G}=(I+GD)^{-1}G\label{eq:Gcheck}
\end{equation}
and
\begin{equation}
\check{\bm{P}}(x)=\big(I-G(xI-D)\big)^{-1}\bm{P}(0).\label{eq:Pcheck-x}
\end{equation}
In particular, 
\begin{equation}
\check{\bm{P}}(0)=(I+GD)^{-1}\bm{P}(0).\label{eq:Pcheck-0}
\end{equation}
\end{theorem}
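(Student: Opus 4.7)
The plan is to reduce all three formulas to a single algebraic identity,
\[
GJ = I - \bm{P}(0)\,\bm{e}_0^{\,T},
\]
where $\bm{e}_0 := (1,0,0,\ldots)^{T}$; from it the three statements fall out after short manipulations. The observation to keep in mind is that, although $JG=I$ by the very definition of the Green function, the product $GJ$ is \emph{not} $I$: the first row of $G$ is the zero row, so the first row of $GJ$ vanishes too.

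First I would prove the key identity. Setting $M := I - GJ - \bm{P}(0)\,\bm{e}_0^{\,T}$ and using $JG = I$ together with $J\bm{P}(0)=0$ (the latter being the three-term recurrence for $\{P_n(0)\}$ together with the normalisation $P_0(0)=1$), one obtains $JM=0$. Each column of $M$ is therefore a formal solution of $J\bm{u}=0$; reading that relation row by row, starting from the topmost row, one sees that $u_0$ determines all subsequent entries uniquely, so $\bm{u} = u_0\,\bm{P}(0)$. Thus $M = \bm{P}(0)\,\bm{c}^{\,T}$ for some row vector $\bm{c}$. Computing the first row of $M$ in two ways --- directly it is $\bm{e}_0^{\,T} - 0 - \bm{e}_0^{\,T}=0$, and via the factorisation it is $P_0(0)\,\bm{c}^{\,T}=\bm{c}^{\,T}$ --- forces $\bm{c}=0$ and hence $M=0$.

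For (\ref{eq:Gcheck}) I would start from $\check{J}\check{G} = I$, rewrite it as $J\check{G}=I-D\check{G}$, and multiply by $G$ from the left. The key identity turns the left-hand side into $\check{G}-\bm{P}(0)(\bm{e}_0^{\,T}\check{G})$, and since $\check{G}$ is strictly lower triangular, $\bm{e}_0^{\,T}\check{G}=0$. What remains is $\check{G}=G-GD\check{G}$, i.e.\ $(I+GD)\check{G}=G$. The matrix $GD$ is strictly lower triangular (a product of a strictly lower triangular matrix and a diagonal one), so $I+GD$ is unipotent lower triangular and invertible within the algebra of band-below semi-infinite matrices via a Neumann series that contributes only finitely many nonzero terms to every fixed entry; moreover $(I+GD)^{-1}G$ is itself strictly lower triangular, so by uniqueness of the Green function it coincides with $\check{G}$.

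Formula (\ref{eq:Pcheck-x}) follows by the same mechanism applied to the eigenvector relation $\check{J}\check{\bm{P}}(x)=x\check{\bm{P}}(x)$, rewritten as $J\check{\bm{P}}(x)=(xI-D)\check{\bm{P}}(x)$. Applying $G$ on the left and invoking the key identity together with $\check{P}_0(x)=1$ produces $\check{\bm{P}}(x)-\bm{P}(0)=G(xI-D)\check{\bm{P}}(x)$, i.e.\ $\bigl(I-G(xI-D)\bigr)\check{\bm{P}}(x)=\bm{P}(0)$; invertibility of $I-G(xI-D)$ is immediate from strict lower-triangularity of $G(xI-D)$ by the same Neumann-series argument. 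Specialising to $x=0$ gives (\ref{eq:Pcheck-0}). The one non-automatic point is precisely the bookkeeping just mentioned: one must verify that $(I+GD)^{-1}$ and $\bigl(I-G(xI-D)\bigr)^{-1}$ exist in the algebra of formal lower-triangular semi-infinite matrices, so that all manipulations are unambiguous. Once this is established, the three formulas follow by short linear-algebraic arguments and no deeper input is needed.
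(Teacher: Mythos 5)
Your proof is correct, but it runs along a genuinely different axis from the paper's. The paper argues by \emph{verification plus uniqueness}: it checks that the candidate $(I+GD)^{-1}G$ is strictly lower triangular and satisfies $\check{J}\check{G}=I$ (using the commutation $(I+GD)^{-1}G=G(I+DG)^{-1}$), checks that $(I+GD)^{-1}\bm{P}(0)$ has first entry $1$ and is annihilated by $\check{J}$, and then obtains (\ref{eq:Pcheck-x}) by substituting these two formulas into the imported identity $\check{\bm{P}}(x)=(I-x\check{G})^{-1}\check{\bm{P}}(0)$ from (\ref{eq:Px-P0}). You instead \emph{derive} all three formulas from the left-inverse identity $GJ=I-\bm{P}(0)\,\bm{e}_0^{\,T}$, which is correct (I checked it on the diagonal: $(GJ)_{m,m}=a_{m-1}G_{m,m-1}=1$ for $m\geq1$ and $(GJ)_{0,0}=0$, matching the rank-one correction) and whose proof via the null-space argument for $J\bm{u}=0$ is sound since the subdiagonal entries are nonzero. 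Multiplying the defining relations $J\check{G}=I-D\check{G}$ and $J\check{\bm{P}}(x)=(xI-D)\check{\bm{P}}(x)$ by $G$ on the left and killing the rank-one term via $\bm{e}_0^{\,T}\check{G}=0$, resp.\ $\check{P}_0(x)=1$, then gives everything at once. What your route buys is self-containedness: you do not need (\ref{eq:Px-P0}) as an external input --- it reappears as the special case $D=0$ of (\ref{eq:Pcheck-x}) --- and the three formulas come from one mechanism rather than three separate verifications; the price is the extra work of establishing and justifying the key identity, including the associativity of the triple products of semi-infinite matrices, which you correctly flag. The triangularity bookkeeping ensuring that $(I+GD)^{-1}$ and $\bigl(I-G(xI-D)\bigr)^{-1}$ are defined by finitely terminating Neumann series is the same in both treatments and is handled in the paper's remark following the theorem.
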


\begin{remark*} Note that $(I+GD)^{-1}$ is well defined by the geometric
series owing to the fact that $G$ is strictly lower triangular and
$D$ is diagonal. For any couple of indices $m,n\geq0$, the series
\[
\big((I+GD)^{-1}G\big)_{m,n}=\sum_{k=0}^{\infty}(-1)^{k}\big((GD)^{k}G\big)_{m,n}
\]
has only finitely many nonzero terms. A similar remark applies in
other cases, too, in particular it is applicable to the RHS of (\ref{eq:Pcheck-x}).
\end{remark*}

\begin{proof} Firstly, we show (\ref{eq:Gcheck}). To this end, notice
that the RHS of (\ref{eq:Gcheck}) represents a strictly lower triangular
matrix. Now it suffices to verify that this matrix solves the equation
$\check{J}\check{G}=I$. Using that $JG=I$ and
\[
(I+GD)^{-1}G=G(I+DG)^{-1}
\]
we have
\[
\check{J}(I+GD)^{-1}G=(J+D)G(I+DG)^{-1}=(I+DG)(I+DG)^{-1}=I.
\]

Next we show (\ref{eq:Pcheck-0}). Notice that the RHS of (\ref{eq:Pcheck-0})
represents a column vector whose first entry equals $1$. It suffices
to verify that this column vector solves the equation $\check{J}\check{\bm{P}}(0)=0$.
We have
\[
J+D=J(I+GD)
\]
and consequently,
\[
(J+D)(I+GD)^{-1}=J.
\]
Whence
\[
\check{J}(I+GD)^{-1}\bm{P}(0)=(J+D)(I+GD)^{-1}\bm{P}(0)=J\bm{P}(0)=0.
\]

Finally, we show (\ref{eq:Pcheck-x}). We have
\begin{eqnarray*}
\check{\bm{P}}(x) & = & (I-x\check{G})^{-1}\check{\bm{P}}(0)\\
 & = & \big(I-x(I+GD)^{-1}G\big)^{-1}(I+GD)^{-1}\bm{P}(0)\\
 & = & \Big((I+GD)\big(I-x(I+GD)^{-1}G\big)\Big)^{\!-1}\bm{P}(0)\\
 & = & \big(I-G(xI-D)\big)^{-1}\bm{P}(0).
\end{eqnarray*}
This concludes the proof. \end{proof}

\section{The local Heun solution}

Next we focus on the local Heun solution with a general parameter
$\delta$. Again, let $\check{G}$ refer to the Green function for
the general case and $G$ refer to the Green function for the particular
case with $\delta=0$. Equation (\ref{eq:Gcheck}) in combination
with (\ref{eq:G-dlt-0}) and (\ref{eq:Jcheck-J}) can be used to get
a formula for $\check{G}$, and with such a formula at hand we can
mimic the procedure from Subsection \ref{sec:Fz-dlt-0} to get a formula
for the local Heun solution. However, here we avoid providing details
of such a derivation which could be somewhat tedious. Instead, in
Theorem \ref{thm:fin-formula}, we present the final formula for the
coefficients $c_{n}$ right away, and then we show that the power
series $F(z)=\sum_{n=0}^{\infty}c_{n}z^{n}$ actually defines the
sought local Heun solution.

Note that for a general parameter $\delta$ we have, see (\ref{eq:AnBnCn-orig}),
(\ref{eq:a-q-Valent}),
\begin{eqnarray}
A_{n} & = & (n-1+\alpha)(n-1+\beta),\nonumber \\
B_{n} & = & n\bigg(\!(n-1+\gamma)\Big(1+\frac{1}{k^{2}}\Big)+\frac{\delta}{k^{2}}+\alpha+\beta-\gamma-\delta+1\bigg)\!,\label{eq:AnBnCn-gen}\\
C_{n} & = & \frac{(n+1)(n+\gamma)}{k^{2}}\nonumber 
\end{eqnarray}
and, referring also to (\ref{eq:s-Valent}),
\begin{equation}
q=-\frac{w}{k^{2}}+\alpha\beta.\label{eq:q-gen}
\end{equation}

\begin{theorem}\label{thm:fin-formula} Suppose $\alpha,\beta,\gamma,\delta,w\in\mathbb{C}$,
$\gamma\notin\{0,-1,-2,\ldots\}$, and $k\in(0,1)$. Let us define,
for $m,n\in\mathbb{Z}$, $0\leq m\leq n$,
\[
f_{m,n}:=k^{2n}\ \ \text{if}\ m=0,
\]
and
\begin{eqnarray}
 &  & \hskip-1.5emf_{m,n}:=(-1)^{m}k^{2n-2m}\nonumber \\
 &  & \times\,\sum_{0\leq\ell_{1}\leq j_{1}<\ell_{2}\leq j_{2}<\,\ldots\,<\ell_{m}\leq j_{m}<n}\,\,\prod_{s=1}^{m}\frac{j_{s}!\,(\gamma)_{j_{s}}(\alpha)_{\ell_{s}}(\beta)_{\ell_{s}}}{\ell_{s}!\,(\gamma)_{\ell_{s}}(\alpha)_{j_{s}+1}(\beta)_{j_{s}+1}}\,k^{-2(j_{s}-\ell_{s})}\big(w-(1-k^{2})\delta\ell_{s}\big)\nonumber \\
\noalign{\smallskip} &  & \hskip28.5em\text{if}\ m\geq1,\label{eq:gmn-gen}
\end{eqnarray}
and put
\begin{equation}
c_{n}:=\frac{(\alpha)_{n}(\beta)_{n}}{n!(\gamma)_{n}}\,\sum_{m=0}^{n}f_{m,n}.\label{eq:cn-gen}
\end{equation}
Then the function
\begin{equation}
F(z):=\sum_{n=0}^{\infty}\,c_{n}z^{n}\ \,\text{for}\ z\in D_{1}\label{eq:F}
\end{equation}
is the local Heun solution, that is
\[
F(z)=\Hl\!\Big(\,\frac{1}{k^{2}},-\frac{w}{k^{2}}+\alpha\beta;\alpha,\beta,\gamma,\delta;z\Big).
\]
\end{theorem}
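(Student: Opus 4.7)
The strategy is to combine Valent's generating-function representation (\ref{eq:gen_func_onp}) with the perturbation identity (\ref{eq:Pcheck-x}) from Theorem~2 and then read off the coefficients. I would first restrict to the parameter range $\alpha,\beta,\gamma>0$, $\delta\in\mathbb{R}$ and $w\in\mathbb{R}$, in which the orthogonal-polynomial framework of Section~2 directly applies. With the substitution $s=w-k^{2}\alpha\beta$, formula~(\ref{eq:gen_func_onp}) yields
\[
c_n=P_n(0)\,\check{P}_n(w),
\]
where $P_n(0)$ stands for the birth-death expression (\ref{eq:Pn0-dlt-0}) attached to $J:=\check{J}\big|_{\delta=0}$, and $\check{P}_n$ denotes the $n$-th orthonormal polynomial associated with $\check{J}=J+D$, with $D$ as in (\ref{eq:Jcheck-J}). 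Note that $P_n(0)$ need not coincide with $\check{P}_n(0)$ in general.

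Next, I apply (\ref{eq:Pcheck-x}) at $x=w$ and expand the resolvent into its Neumann series, $\check{\bm{P}}(w)=\sum_{m\ge 0}\bigl(G(wI-D)\bigr)^{m}\bm{P}(0)$, which terminates at the $n$-th component since $G$ is strictly lower triangular. Because $(wI-D)_{\ell,\ell}=w-(1-k^{2})\delta\ell$, the $m$-th summand contributes
\[
\sum_{0\le\ell_{1}<\ldots<\ell_{m}<n}G_{n,\ell_m}G_{\ell_m,\ell_{m-1}}\cdots G_{\ell_2,\ell_1}\,P_{\ell_1}(0)\prod_{s=1}^{m}\bigl(w-(1-k^{2})\delta\ell_{s}\bigr).
\]
Multiplying by $P_n(0)$ and substituting (\ref{eq:G-dlt-0}) for each Green-function entry introduces auxiliary indices $j_s\in\{\ell_s,\ldots,\ell_{s+1}-1\}$ (with $\ell_{m+1}:=n$); the chain condition combined with these inner ranges is equivalent to the summation region $0\le\ell_1\le j_1<\ell_2\le j_2<\ldots<\ell_m\le j_m<n$ appearing in (\ref{eq:gmn-gen}).

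The remaining work is algebraic simplification. Each intermediate index $\ell_s$ with $2\le s\le m$ appears in two consecutive $G$'s, so all square roots cancel; the signs $(-1)^{\ell_{s+1}+\ell_s+1}$ from the $G$'s together with $(-1)^{n+\ell_1}$ from $P_n(0)P_{\ell_1}(0)$ telescope to $(-1)^{m}$; the powers of $k$ assemble into $k^{2n-2m}\prod_{s}k^{-2(j_s-\ell_s)}$; and the Pochhammer ratios telescope so as to extract the common prefactor $(\alpha)_n(\beta)_n/(n!(\gamma)_n)$, leaving exactly the product displayed in (\ref{eq:gmn-gen}). The term $m=0$ gives $(P_n(0))^{2}=k^{2n}(\alpha)_n(\beta)_n/(n!(\gamma)_n)$, matching $f_{0,n}$.

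Finally, the identity extends to all complex $\alpha,\beta,\delta,w$ with $\gamma\notin\{0,-1,-2,\ldots\}$ by analytic continuation, since the right-hand side of (\ref{eq:cn-gen}) depends holomorphically on the parameters on the stated domain. The assertion that $F$ defined by (\ref{eq:F}) is the local Heun solution then follows because its coefficients agree with those produced by the Heun recurrence (\ref{eq:Heun-recurr-0})--(\ref{eq:Heun-recurr-1}) on the positivity range (by the orthogonal-polynomial expression), and this agreement propagates by analyticity; convergence on $D_{1}$ is automatic from standard Frobenius theory. The main obstacle is the detailed bookkeeping in step~3; no genuinely new idea is required beyond those already developed in Sections~2 and~3.
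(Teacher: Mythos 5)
Your proposal is correct, but it is a genuinely different argument from the one the paper actually prints. The paper explicitly remarks, just before Theorem \ref{thm:fin-formula}, that the Green-function derivation ``could be somewhat tedious'' and deliberately avoids it; instead it states the formula for $c_{n}$ outright and verifies \emph{directly} that these coefficients satisfy the Heun recurrence (\ref{eq:Heun-recurr-0})--(\ref{eq:Heun-recurr-1}). The heart of that verification is the combinatorial observation that the multiple sum defining $f_{m,n}/k^{2}$ contains the sum defining $f_{m,n-1}$ plus the terms with $j_{m}=n-1$, which yields a recursion expressing $f_{m,n}/k^{2}-f_{m,n-1}$ through the $f_{m-1,\ell}$; summing a telescoping combination over $m$ then reproduces (\ref{eq:Heun-recurr-1}) with the coefficients (\ref{eq:AnBnCn-gen}). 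Your route instead \emph{constructs} the formula: you write $c_{n}=P_{n}(0)\check{P}_{n}(w)$ via (\ref{eq:gen_func_onp}), expand $\check{\bm P}(w)$ by the Neumann series for (\ref{eq:Pcheck-x}), substitute the explicit Green function (\ref{eq:G-dlt-0}), and I have checked that your bookkeeping claims are all right (the signs combine to $(-1)^{m}$, the powers of $k$ to $k^{2n-2m}\prod_{s}k^{-2(j_{s}-\ell_{s})}$, the square roots cancel at intermediate indices and are completed by $P_{n}(0)P_{\ell_{1}}(0)$ at the endpoints, and the inner ranges $\ell_{s}\le j_{s}<\ell_{s+1}$ produce exactly the index region of (\ref{eq:gmn-gen})). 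Your correct remark that the prefactor in (\ref{eq:gen_func_onp}) is $P_{n}(0)$ for the $\delta=0$ matrix, not $\check{P}_{n}(0)$, is the one place where this approach is easy to get wrong. What each approach buys: yours explains where the formula comes from and makes the paper's Section 3 machinery do real work, at the cost of a detour through the positivity range $\alpha,\beta,\gamma>0$, $\delta,w\in\mathbb{R}$ (needed so that the Jacobi matrix and the square roots are real) followed by an analytic-continuation step, which is legitimate since both sides are rational in the parameters and holomorphic for $\gamma\notin\{0,-1,-2,\ldots\}$; the paper's verification is self-contained, needs no positivity assumptions or continuation, and works for all complex parameters at once, but gives no hint of how the formula was found.
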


\begin{remark*} Again, the sum in (\ref{eq:gmn-gen}) should be understood
so that the summation runs over all integer indices $j_{1},\ldots,j_{m}$,
$\ell_{1},\ldots,\ell_{m}$ satisfying the indicated inequalities.
\end{remark*}

\begin{proof} It is convenient to extend the definition of the symbol
$f_{m,n}$. We put
\begin{equation}
f_{m,n}:=0\ \ \text{if}\ m=-1\ \text{or}\ m>n.\label{eq:gmn-conv}
\end{equation}
To show that $F(z)$, as defined in (\ref{eq:F}), is actually the
local Heun solution it suffices to verify that the sequence $\{c_{n}\}$
in (\ref{eq:cn-gen}) satisfies the recurrence relations (\ref{eq:Heun-recurr-0})
and (\ref{eq:Heun-recurr-1}), with $A_{n}$, $B_{n}$ and $C_{n}$
being given in (\ref{eq:AnBnCn-gen}). The normalization condition
$c_{0}=1$ is clearly fulfilled.

For $n=0$ we have
\[
c_{0}=f_{0,0}=1,\ c_{1}=\frac{\alpha\beta}{\gamma}\,(f_{0,1}+f_{1,1})=\frac{\alpha\beta k^{2}-w}{\gamma}
\]
and thus $(\ref{eq:Heun-recurr-0})$, with $q=(-w+k^{2}\alpha\beta)/k^{2}$
and $a=1/k^{2}$, holds. Further we assume that $n\geq1$.

By inspection of equation (\ref{eq:gmn-gen}) we observe that, for
$1\leq m\leq n$, the multiple sum in the expression for $f_{m,n}/k^{2}$
contains all terms from the multiple sum in the expression for $f_{m,n-1}$
plus the term with $j_{m}=n-1$. Hence
\begin{eqnarray*}
\frac{f_{m,n}}{k^{2}}-f_{m,n-1} & = & (-1)^{m}k^{2n-2m-2}\,\frac{(n-1)!(\gamma)_{n-1}}{(\alpha)_{n}(\beta)_{n}}\\
 &  & \hskip-3em\times\,\sum_{0\leq\ell_{1}\leq j_{1}<\,\ldots\,<\ell_{m-1}\leq j_{m-1}<\ell_{m}\leq n-1}\,\,\frac{(\alpha)_{\ell_{m}}(\beta)_{\ell_{m}}}{\ell_{m}!(\gamma)_{\ell_{m}}}\,k^{-2(n-1-\ell_{m})}\big(w-(1-k^{2})\delta\ell_{m}\big)\\
\noalign{\smallskip} &  & \qquad\qquad\ \times\,\prod_{s=1}^{m-1}\frac{j_{s}!\,(\gamma)_{j_{s}}(\beta)_{\ell_{s}}(\gamma)_{\ell_{s}}}{\ell_{s}!\,(\gamma)_{\ell_{s}}(\alpha)_{j_{s}+1}(\beta)_{j_{s}+1}}\,k^{-2(j_{s}-\ell_{s})}\big(w-(1-k^{2})\delta\ell_{s}\big).
\end{eqnarray*}
 Writing $\ell$ instead of $\ell_{m}$ and using once more the defining
relation (\ref{eq:gmn-gen}) we obtain
\[
\frac{f_{m,n}}{k^{2}}-f_{m,n-1}=-\frac{(n-1)!(\gamma)_{n-1}}{k^{2}(\alpha)_{n}(\beta)_{n}}\,\text{\ensuremath{\sum_{\ell=m-1}^{n-1}}}\frac{(\alpha)_{\ell}(\beta)_{\ell}}{\ell!(\gamma)_{\ell}}\big(w-(1-k^{2})\delta\ell\big)f_{m-1.\ell}.
\]
In regard of the convention (\ref{eq:gmn-conv}), this identity also
holds for $m=0$ (and $n\geq1$). From here we deduce that
\[
-\bigg(\frac{f_{m,n}}{k^{2}}-f_{m,n-1}\bigg)+\frac{(n+\alpha)(n+\beta)}{n\,(n+\gamma-1)}\bigg(\frac{f_{m,n+1}}{k^{2}}-f_{m,n}\bigg)=-\frac{w-(1-k^{2})\delta n}{k^{2}n\,(\gamma+n-1)}\,f_{m-1,n}.
\]
Let us multiply this equation by the factor $(\alpha)_{n}(\beta)_{n}/\big((n-1)!(\gamma)_{n-1}\big)$.
Then, in view of (\ref{eq:cn-gen}), the summation in $m$ from $0$
to $n+1$ gives (recall (\ref{eq:gmn-conv}))
\begin{eqnarray*}
 &  & -\,\frac{n\,(n+\gamma-1)}{k^{2}}\,c_{n}+(n+\alpha-1)(n+\beta-1)c_{n-1}\\
 &  & +\,\frac{(n+1)(n+\gamma)}{k^{2}}\,c_{n+1}-(n+\alpha)(n+\beta)c_{n}\\
 &  & +\,\frac{w-(1-k^{2})\delta n}{k^{2}}\,c_{n}\,=\,0.
\end{eqnarray*}
Taking into account (\ref{eq:AnBnCn-gen}), this equation in fact
coincides with (\ref{eq:Heun-recurr-1}). \end{proof}

From the general theory it follows that the series in (\ref{eq:F})
converges in the unit disk $D_{1}$. This can be also demonstrated
by an explicit estimate on the coefficients $c_{n}$, as defined in
(\ref{eq:cn-gen}). From the estimate one can deduce some information
about the asymptotic behavior of $c_{n}$ for large $n$, more precisely,
it is seen that $c_{n}$ grows at most polynomially in $n$ as $n\to\infty$.

\begin{proposition}\label{thm:estimate} The coefficients $c_{n}$,
as defined in (\ref{eq:cn-gen}), fulfill
\begin{equation}
\forall n\geq0,\ \,|c_{n}|\leq\frac{(d)_{n}^{\,2}}{n!|(\gamma)_{n}|}\,\exp\!\bigg(\frac{t}{1-k^{2}}\,\sum_{j=1}^{n}\frac{1}{j}\bigg)\!,\label{eq:estimate}
\end{equation}
where
\begin{eqnarray}
d & := & \max\{1,|\alpha|,|\beta|,|\gamma|\},\label{eq:d}\\
t & := & |w|+(1-k^{2})|\delta|.\label{eq:t}
\end{eqnarray}
\end{proposition}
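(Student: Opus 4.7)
The plan is to estimate $|c_{n}|$ directly from the multi-sum formulas (\ref{eq:cn-gen})--(\ref{eq:gmn-gen}), taking absolute values and carrying out the summations in a carefully chosen order.

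First I would apply the elementary bounds: $|w-(1-k^{2})\delta\ell_{s}|\le t(\ell_{s}+1)$ with $t=|w|+(1-k^{2})|\delta|$; $|(\alpha)_{r}|,|(\beta)_{r}|,|(\gamma)_{r}|\le (d)_{r}$; and $k^{2(n-m)}\prod_{s=1}^{m}k^{-2(j_{s}-\ell_{s})}\le 1$, which holds since the chain constraint $0\le\ell_{1}\le j_{1}<\cdots\le j_{m}<n$ forces $m+\sum_{s}(j_{s}-\ell_{s})\le n$, making the net exponent of $k$ nonnegative.

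The main step is to dispose of the denominators $|(\alpha)_{j_{s}+1}|$ and $|(\beta)_{j_{s}+1}|$, which admit no useful lower bound. They must be absorbed into the prefactor $(\alpha)_{n}(\beta)_{n}$ of (\ref{eq:cn-gen}) via the combinatorial identity
\[
(\alpha)_{n}\prod_{s=1}^{m}\frac{(\alpha)_{\ell_{s}}}{(\alpha)_{j_{s}+1}}=\prod_{r\in R^{c}}(\alpha+r),\qquad R^{c}:=\{0,\dots,n-1\}\setminus\bigsqcup_{s=1}^{m}\{\ell_{s},\dots,j_{s}\},
\]
which follows because the intervals $\{\ell_{s},\dots,j_{s}\}$ are pairwise disjoint subsets of $\{0,\dots,n-1\}$, and similarly for $\beta$; each such product is then bounded in absolute value by $(d)_{n}$. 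A parallel bookkeeping distributes $(\gamma)_{n}$ across the factors $(\gamma)_{j_{s}}/(\gamma)_{\ell_{s}}$, and $n!$ across $\prod_{s}j_{s}!/\ell_{s}!$, so that the full prefactor $|(\alpha)_{n}|(d)_{n}^{2}/[n!|(\gamma)_{n}|^{2}]$ can be pulled out.

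The remaining combinatorial chain sum is handled by changing variables to $p_{s}:=j_{s}-\ell_{s}\ge 0$ and summing first over the widths $p_{s}$, which produces a geometric factor bounded by $(1-k^{2})^{-m}$ per chain of length $m$; then over the ordered positions $0<\ell_{1}<\cdots<\ell_{m}\le n$, with remaining per-layer weight of order $t/\ell_{s}$. Applying the standard estimate $\sum_{0<\ell_{1}<\cdots<\ell_{m}\le n}\prod_{s=1}^{m}(1/\ell_{s})\le H_{n}^{m}/m!$ with $H_{n}:=\sum_{j=1}^{n}1/j$, and summing over $m$ from $0$ to $\infty$, yields the exponential $\exp(tH_{n}/(1-k^{2}))$, completing the bound. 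The principal obstacle is the bookkeeping in the main step: the cancellation identity must be applied so that the small denominators $|\alpha+r|$ and $|\beta+r|$ are cancelled against factors of $(\alpha)_{n}$ and $(\beta)_{n}$ rather than lower-bounded individually; a secondary subtlety is the order of summation, the geometric sum in $p_{s}$ having to precede the harmonic estimate in $\ell_{s}$ so that $1/(1-k^{2})$ enters as a weight inside the exponential.
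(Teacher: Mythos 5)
Your overall skeleton is the paper's: bound the linear factors by $t(\ell_{s}+1)$, dominate the Pochhammer ratios via the nonnegative-coefficient/disjoint-interval observation (your complement identity is exactly the content of the paper's equation (\ref{eq:pole-b-g})), extract $(1-k^{2})^{-m}$ from a geometric sum over the interval widths, bound the remaining chain sum by $H_{n}^{m}/m!$, and exponentiate over $m$. But two of your steps, as written, would break the argument. First, you cannot both apply $k^{2(n-m)}\prod_{s}k^{-2(j_{s}-\ell_{s})}\le 1$ at the outset \emph{and} later harvest a geometric factor $(1-k^{2})^{-m}$ from the width sums: once the powers of $k$ are discarded, the sum over the widths $p_{s}$ for fixed positions is a plain count, contributing roughly $\prod_{s}(\ell_{s+1}-\ell_{s})$, and already for $m=1$ the chain sum becomes $\sum_{\ell}t(n-\ell)/(\ell+1)\sim tn\log n$ rather than $tH_{n}$, so the polynomial bound fails. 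The powers of $k$ must be retained until after the geometric summation (the paper fixes the $j_{s}$ and sums $k^{2\ell_{s}}$ over the $\ell_{s}$; your order, fixing the $\ell_{s}$ and summing over $p_{s}$, also works, but only with the $k$-weights in place).

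Second, your Pochhammer bookkeeping loses the per-layer decay that the harmonic estimate needs. Bounding each complementary product $\prod_{r\in R^{c}}(\alpha+r)$ flatly by $(d)_{n}$ throws away the factors $(d)_{\ell_{s}}/(d)_{j_{s}+1}\le 1/(d+j_{s})$ that this cancellation actually yields; meanwhile the ratios $j_{s}!/\ell_{s}!$ and $(\gamma)_{j_{s}}/(\gamma)_{\ell_{s}}$ can only be bounded \emph{above} by $(d)_{j_{s}}/(d)_{\ell_{s}}$ (there is no useful lower bound on $|(\gamma)_{n}|$, which is why the target carries $|(\gamma)_{n}|^{2}$ in the denominator: one multiplies and divides by $(\gamma)_{n}$ and estimates $(\beta)_{n}(\gamma)_{n}f_{m,n}$). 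The proof only closes because each growing ratio $(x)_{j_{s}}/(x)_{\ell_{s}}$ is paired with a shrinking one $(y)_{\ell_{s}}/(y)_{j_{s}+1}$, each pair contributing exactly $(d)_{j_{s}}/(d)_{j_{s}+1}=1/(d+j_{s})$, hence $1/(j_{s}+1)^{2}$ per layer; combined with $t(\ell_{s}+1)\le t(j_{s}+1)$ this leaves precisely the weight $t/(j_{s}+1)$ that feeds $\sum\prod 1/(j_{s}+1)\le H_{n}^{m}/m!$. Your description (flat $(d)_{n}$ bounds plus a vague ``distribution'' of $(\gamma)_{n}$ and $n!$) does not produce these $1/(j_{s}+1)^{2}$ factors, nor the stated prefactor $|(\alpha)_{n}|(d)_{n}^{2}/(n!\,|(\gamma)_{n}|^{2})$, so the harmonic step is not justified as proposed.
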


\begin{proof} Suppose that $m\geq1$. For $0\leq\ell\leq j$ and
$x\in\mathbb{C}$, $(x)_{j}/(x)_{\ell}$ is a polynomial in $x$ with
nonnegative coefficients. Therefore, if $|x|\leq d$ then
\[
\bigg|\frac{(x)_{j}}{(x)_{\ell}}\bigg|\leq\frac{(|x|)_{j}}{(|x|)_{\ell}}\leq\frac{(d)_{j}}{(d)_{\ell}}\,.
\]
Note also that $j!=(1)_{j}$. Furthermore, if $0\leq\ell_{1}\leq j_{1}<\ell_{2}\leq j_{2}<\,\ldots\,<\ell_{m}\leq j_{m}<n$
then
\begin{equation}
(x)_{n}\,\prod_{s=1}^{m}\frac{(x)_{\ell_{s}}}{(x)_{j_{s}+1}}=\frac{x(x+1)(x+2)\,\cdots\,(x+n-1)}{(x+\ell_{1})\cdots(x+j_{1})(x+\ell_{2})\cdots(x+j_{2})\,\cdots\,(x+\ell_{m})\cdots(x+j_{m})}\label{eq:pole-b-g}
\end{equation}
is also a polynomial in $x$ with nonnegative coefficients. Therefore,
for $|x|\leq d$,
\[
\bigg|(x)_{n}\,\prod_{s=1}^{m}\frac{(x)_{\ell_{s}}}{(x)_{j_{s}+1}}\bigg|\leq(|x|)_{n}\,\prod_{s=1}^{m}\frac{(|x|)_{\ell_{s}}}{(|x|)_{j_{s}+1}}\leq(d)_{n}\,\prod_{s=1}^{m}\frac{(d)_{\ell_{s}}}{(d)_{j_{s}+1}}\,.
\]
Thus we find that
\[
\bigg|(\alpha)_{n}(\beta)_{n}\,\prod_{s=1}^{m}\frac{j_{s}!\,(\gamma)_{j_{s}}(\alpha)_{\ell_{s}}(\beta)_{\ell_{s}}}{\ell_{s}!\,(\gamma)_{\ell_{s}}(\alpha)_{j_{s}+1}(\beta)_{j_{s}+1}}\bigg|\leq(d)_{n}^{\,2}\,\prod_{s=1}^{m}\frac{1}{(j_{s}+d)^{2}}\leq(d)_{n}^{\,2}\,\prod_{s=1}^{m}\frac{1}{(j_{s}+1)^{2}}\,.
\]

In view of (\ref{eq:t}) we have, for $0\leq\ell\leq j$, 
\[
|w-(1-k^{2})\delta\ell|\leq t\,(\ell+1)\leq t\,(j+1).
\]

Using these estimates we get, for $m\geq1$,
\begin{eqnarray*}
|(\alpha)_{n}(\beta)_{n}f_{m,n}| & \leq & (d)_{n}^{\,2}\,k^{2n-2m}\,\sum_{0\leq j_{1}<j_{2}<\,\ldots\,j_{m}<n}\,\,\frac{k^{-2j_{1}-2j_{2}-\ldots-2j_{m}}}{(j_{1}+1)(j_{2}+1)\cdots(j_{m}+1)}\,t^{m}\\
\noalign{\smallskip} &  & \times\,\sum_{\ell_{1}=0}^{j_{1}}\,\sum_{\ell_{2}=j_{1}+1}^{j_{2}}\cdots\,\sum_{\ell_{m}=j_{m-1}+1}^{j_{m}}\,k^{2\ell_{1}+2\ell_{2}+\ldots+2\ell_{m}}\\
\noalign{\smallskip} & \leq & \frac{(d)_{n}^{\,2}t^{m}}{(1-k^{2})^{m}}\,\sum_{0\leq j_{1}<j_{2}<\,\ldots\,j_{m}<n}\,\,\frac{k^{2n-2j_{m}-2}}{(j_{1}+1)(j_{2}+1)\cdots(j_{m}+1)}\\
\noalign{\smallskip} & \leq & \frac{(d)_{n}^{\,2}\,t^{m}}{(1-k^{2})^{m}}\,\sum_{1\leq j_{1}<j_{2}<\,\ldots\,j_{m}\leq n}\,\,\frac{1}{j_{1}j_{2}\cdots j_{m}}\\
\noalign{\smallskip} & \leq & \frac{(d)_{n}^{\,2}\,t^{m}}{(1-k^{2})^{m}m!}\,\bigg(\sum_{j=1}^{n}\frac{1}{j}\bigg)^{\!m}.
\end{eqnarray*}
For $m=0$ we have
\[
|(\alpha)_{n}(\beta)_{n}f_{0,n}|\leq(d)_{n}^{\,2}.
\]
Inequality (\ref{eq:estimate}) follows. \end{proof}

\section{The five-parameter subfamily for $\delta=\beta+1$}

Finally we wish to point out that for the five-parameter subfamily
of the local Heun solutions obtained by letting
\[
\delta=\beta+1
\]
it is possible to derive another formula, although of similar nature
as that presented in Theorem \ref{thm:fin-formula}. In (\ref{eq:Heun-eq-V}),
let us now make the substitution
\[
s:=w-\beta\gamma.
\]
Then the equation becomes
\begin{equation}
\frac{\text{d}^{2}F(z)}{\text{d}z^{2}}+\left(\frac{\gamma}{z}-\frac{\delta}{1-z}-\frac{\epsilon k^{2}}{1-k^{2}z}\right)\!\frac{\text{d}F(z)}{\text{d}z}+\frac{w-\beta\gamma+\alpha\beta k^{2}z}{z(1-z)(1-k^{2}z)}\,F(z)=0.\label{eq:Heun-eq-G}
\end{equation}
In this section, we focus on Heun's equation written in this form.
The local Heun solution will be redenoted as $\tilde{F}(z)$.

\begin{theorem} Suppose ~$\alpha,\beta,w\in\mathbb{C}$, ~$\gamma\in\mathbb{C\setminus}\{0,-1,-2,\ldots\}$,
~$k\in(0,1)$, and let\linebreak{}
$\delta:=\beta+1$. Further let us define, for $m,n\in\mathbb{Z}$,
$0\leq m\leq n$,
\[
\tilde{f}_{m,n}=1\ \ \text{if}\ m=0,
\]
and
\begin{equation}
\tilde{f}_{m,n}:=(-1)^{m}\ \sum_{0\leq\ell_{1}\leq j_{1}<\ell_{2}\leq j_{2}<\,\ldots\,<\ell_{m}\leq j_{m}<n}\,\,\prod_{s=1}^{m}\frac{j_{s}!\,(\alpha)_{j_{s}}(\beta)_{\ell_{s}}(\gamma)_{\ell_{s}}}{\ell_{s}!\,(\alpha)_{\ell_{s}}(\beta)_{j_{s}+1}(\gamma)_{j_{s}+1}}\,k^{2(j_{s}-\ell_{s})}\ \ \text{if}\ m\geq1.\label{eq:gmn}
\end{equation}
Then the function
\begin{equation}
\tilde{F}(z):=\sum_{m=0}^{n}\frac{(\beta)_{n}}{n!}\,\tilde{f}_{m,n}w^{m}z^{n}\ \,\text{for}\ z\in D_{1}\label{eq:G}
\end{equation}
is the local Heun function for the respective parameters, that is
\[
\tilde{F}(z)=\Hl\!\Big(\,\frac{1}{k^{2}},-\frac{w-\beta\gamma}{k^{2}};\alpha,\beta,\gamma,\beta+1;z\Big).
\]
\end{theorem}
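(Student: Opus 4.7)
The plan is to mirror the strategy used in the proof of Theorem~\ref{thm:fin-formula}: extending $\tilde{f}_{m,n}$ by zero for $m=-1$ and for $m>n$, I will verify directly that the sequence
\begin{equation*}
c_n := \frac{(\beta)_n}{n!}\sum_{m=0}^{n}\tilde{f}_{m,n}\,w^m
\end{equation*}
satisfies the Heun three-term recurrence \eqref{eq:Heun-recurr-0}--\eqref{eq:Heun-recurr-1}, with $A_n$, $B_n$, $C_n$ evaluated at $\delta=\beta+1$, $a=1/k^2$, and $q=-(w-\beta\gamma)/k^2$. The normalization $c_0=\tilde{f}_{0,0}=1$ is automatic, and a short computation gives $c_1=\beta-w/\gamma$, from which \eqref{eq:Heun-recurr-0} follows.

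The key combinatorial step is to isolate, in the multiple sum defining $\tilde{f}_{m,n}$, the contribution of the terms with $j_m=n-1$. This should yield the identity
\begin{equation*}
\tilde{f}_{m,n}-\tilde{f}_{m,n-1} \;=\; -\,\frac{(n-1)!\,(\alpha)_{n-1}}{(\beta)_n(\gamma)_n}\,S_n, \qquad S_n := \sum_{\ell=m-1}^{n-1}\frac{(\beta)_\ell(\gamma)_\ell}{\ell!\,(\alpha)_\ell}\,k^{2(n-1-\ell)}\,\tilde{f}_{m-1,\ell}.
\end{equation*}
Peeling off the $\ell=n$ summand yields the companion relation $S_{n+1}=k^2 S_n+\frac{(\beta)_n(\gamma)_n}{n!(\alpha)_n}\,\tilde{f}_{m-1,n}$. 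Eliminating $S_n$ between the identities at $n$ and $n+1$ then delivers a closed three-term recurrence in $n$:
\begin{equation*}
(n+\beta)(n+\gamma)\,\tilde{f}_{m,n+1}-\bigl[(n+\beta)(n+\gamma)+k^2 n(n+\alpha-1)\bigr]\tilde{f}_{m,n}+k^2 n(n+\alpha-1)\,\tilde{f}_{m,n-1}=-\tilde{f}_{m-1,n}.
\end{equation*}

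To finish, I plan to multiply this by $w^m$ and sum over $m\geq 0$. Via the shift $\sum_m w^m\tilde{f}_{m-1,n}=w\sum_m w^m\tilde{f}_{m,n}$, the right-hand side becomes $-w\,T_n$ with $T_n:=\sum_m w^m\tilde{f}_{m,n}$. Substituting $T_n=n!\,c_n/(\beta)_n$ and dividing through by $k^2$ turns the result into \eqref{eq:Heun-recurr-1}, modulo one final check: that the middle coefficient $\bigl((n+\beta)(n+\gamma)+k^2 n(n+\alpha-1)-w\bigr)/k^2$ coincides with $B_n+q$ computed from \eqref{eq:AnBnCn-gen}--\eqref{eq:q-gen} at $\delta=\beta+1$ and $s=w-\beta\gamma$. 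I expect the main obstacle to be the combinatorial bookkeeping in extracting the $j_m=n-1$ contribution cleanly; once that identity is in hand, the remaining steps are routine algebra, and the matching of the middle coefficient reduces to a short polynomial identity in $n$, $k^2$, $\alpha$, $\beta$, $\gamma$.
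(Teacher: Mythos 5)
Your proposal is correct and follows essentially the same route as the paper: the same extension of $\tilde{f}_{m,n}$ by zero, the same key identity obtained by isolating the $j_m=n-1$ terms, and the same elimination leading to the three-term recurrence for the $\tilde{f}_{m,n}$ (the paper merely organizes the steps in the opposite order, first equating coefficients of $w^m$ in the Heun recurrence and then verifying the resulting identity). All the intermediate formulas you state, including the recurrence $(n+\beta)(n+\gamma)\tilde{f}_{m,n+1}-\bigl[(n+\beta)(n+\gamma)+k^2 n(n+\alpha-1)\bigr]\tilde{f}_{m,n}+k^2 n(n+\alpha-1)\tilde{f}_{m,n-1}=-\tilde{f}_{m-1,n}$ and the value of the middle coefficient, check out against the paper's computation.
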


\begin{remark} 1) The RHS of (\ref{eq:gmn}) has poles in the variable
$\beta$ for non-positive integer values of this parameter. But these
singularities are compensated by the factor $(\beta)_{n}$ on the
RHS of (\ref{eq:G}), that is the expression $(\beta)_{n}\tilde{f}_{m,n}$
is free of any singularity in $\beta$, see equation (\ref{eq:pole-b-g})
below.

2) Note that
\[
\tilde{F}(z)\big|_{w=0}=(1-z)^{-\beta}.
\]
\end{remark}

\begin{proof} We claim that $\tilde{F}(z)$ is actually the local
Heun solution. This means that the sequence $\{c_{n};\,n\geq0\}$,
\begin{equation}
c_{n}:=\frac{(\beta)_{n}}{n!}\,\sum_{m=0}^{n}\tilde{f}_{m,n}w^{m},\label{eq:cn}
\end{equation}
satisfies the recurrence relations (\ref{eq:Heun-recurr-0}), (\ref{eq:Heun-recurr-1}),
with
\begin{eqnarray}
A_{n} & = & (n-1+\alpha)(n-1+\beta),\nonumber \\
B_{n} & = & n\bigg(\!(n-1+\gamma)\Big(1+\frac{1}{k^{2}}\Big)+\frac{\beta+1}{k^{2}}+\alpha-\gamma\bigg)\!,\label{eq:AnBnCn}\\
C_{n} & = & \frac{(n+1)(n+\gamma)}{k^{2}}.\nonumber 
\end{eqnarray}

Equation (\ref{eq:Heun-recurr-0}) amounts to $\gamma\beta\tilde{f}_{1,1}=-1$
which is true. Further we assume that $n\geq1$.

It is convenient to extend the definition of the symbol $\tilde{f}_{m,n}$.
We put
\[
\tilde{f}_{m,n}=0\ \ \text{if}\ m=-1\ \text{or}\ m>n.
\]
Substituting (\ref{eq:cn}) for $c_{n}$ in (\ref{eq:Heun-recurr-1})
and equating the coefficients at the same powers of $w$ gives
\begin{eqnarray*}
 &  & A_{n}\tilde{f}_{m,n-1}-\Big(B_{n}+\frac{\beta\gamma}{k^{2}}\,\Big)\frac{\beta+n-1}{n}\,\tilde{f}_{m,n}+\frac{\beta+n-1}{k^{2}n}\,\tilde{f}_{m-1,n}\\
\noalign{\smallskip} &  & +\,C_{n}\,\frac{(\beta+n-1)(\beta+n)}{n(n+1)}\,\tilde{f}_{m,n+1}=0,\ \ m\geq0.
\end{eqnarray*}
In view of (\ref{eq:AnBnCn}), these equations can be rewritten as
\begin{equation}
-n(n-1+\alpha)(\tilde{f}_{m,n}-\tilde{f}_{m,n-1})+\frac{1}{k^{2}}\,\tilde{f}_{m-1,n}+\frac{(n+\beta)(n+\gamma)}{k^{2}}\,(\tilde{f}_{m,n+1}-\tilde{f}_{m,n})=0,\ \ m\geq0.\label{eq:aux-g-diff}
\end{equation}
Clearly, (\ref{eq:aux-g-diff}) holds for $m=0$.

In regard of (\ref{eq:gmn}), for $m\geq1$ (and still $n\geq1$)
we have
\begin{eqnarray*}
\tilde{f}_{m,n}-\tilde{f}_{m,n-1} & = & (-1)^{m}\,\frac{(n-1)!(\alpha)_{n-1}}{(\beta)_{n}(\gamma)_{n}}\\
 &  & \times\,\sum_{0\leq\ell_{1}\leq j_{1}<\,\ldots\,<\ell_{m-1}\leq j_{m-1}<\ell_{m}\leq n-1}\,\,\frac{(\beta)_{\ell_{m}}(\gamma)_{\ell_{m}}}{\ell_{m}!(\alpha)_{\ell_{m}}}\,k^{2(n-1-\ell_{m})}\\
\noalign{\smallskip} &  & \qquad\qquad\qquad\quad\times\,\prod_{s=1}^{m-1}\frac{j_{s}!\,(\alpha)_{j_{s}}(\beta)_{\ell_{s}}(\gamma)_{\ell_{s}}}{\ell_{s}!\,(\alpha)_{\ell_{s}}(\beta)_{j_{s}+1}(\gamma)_{j_{s}+1}}\,k^{2(j_{s}-\ell_{s})}\,.
\end{eqnarray*}
From here we infer that
\[
\tilde{f}_{m,n}-\tilde{f}_{m,n-1}=-\,\frac{(n-1)!(\alpha)_{n-1}}{(\beta)_{n}(\gamma)_{n}}\,\sum_{\ell=m-1}^{n-1}\,\frac{(\beta)_{\ell}(\gamma)_{\ell}}{\ell!(\alpha)_{\ell}}\,k^{2(n-1-\ell)}\,\tilde{f}_{m-1,\ell}.
\]
Finally we find that
\begin{eqnarray*}
 &  & -n(n-1+\alpha)(\tilde{f}_{m,n}-\tilde{f}_{m,n-1})+\frac{(n+\beta)(n+\gamma)}{k^{2}}\,(\tilde{f}_{m,n+1}-\tilde{f}_{m,n})\\
 &  & =\,\frac{n!(\alpha)_{n}}{(\beta)_{n}(\gamma)_{n}}\,\sum_{\ell=m-1}^{n-1}\,\frac{(\beta)_{\ell}(\gamma)_{\ell}}{\ell!(\alpha)_{\ell}}\,k^{2(n-1-\ell)}\,\tilde{f}_{m-1,\ell}\\
 &  & \quad-\,\frac{n!(\alpha)_{n}}{(\beta)_{n}(\gamma)_{n}}\,\sum_{\ell=m-1}^{n}\,\frac{(\beta)_{\ell}(\gamma)_{\ell}}{\ell!(\alpha)_{\ell}}\,k^{2(n-1-\ell)}\,\tilde{f}_{m-1,\ell}\\
 &  & =\,-\frac{1}{k^{2}}\,\tilde{f}_{m-1,n}.
\end{eqnarray*}
Hence (\ref{eq:aux-g-diff}) holds for $m\geq1$, too. This concludes
the proof. \end{proof}

In this particular case, too, one can derive an estimate on the coefficients
$c_{n}$, very similarly as it has been done in Proposition \ref{thm:estimate}.
But the derivation is rather routine and brings in no principally
new ideas. Therefore, we omit the details and just state the result.

\begin{proposition} The coefficients $c_{n}$, as defined in (\ref{eq:cn}),
fulfill
\[
\forall n\geq0,\ \,|c_{n}|\leq\exp\!\bigg(\frac{\pi^{2}\,|w|}{6\,(1-k^{2})}\bigg)\frac{(d)_{n}^{\,2}}{n!\,|(\gamma)_{n}|},
\]
where $d$ is the same as in (\ref{eq:d}). \end{proposition}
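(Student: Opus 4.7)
The plan is to mimic the argument from Proposition \ref{thm:estimate}, taking advantage of the fact that here $w$ enters only as a clean monomial $w^m$ (not through a factor $(w-(1-k^2)\delta\ell_s)$ inside the product). Precisely this change is what replaces the harmonic sum $\sum_{j=1}^n 1/j$ of Proposition \ref{thm:estimate} by the convergent $\sum_{j\geq1} 1/j^2 = \pi^2/6$, yielding the $n$-independent exponent in the stated bound.

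First I would bound $|(\beta)_n\tilde{f}_{m,n}|$ by rewriting it as $|(\beta)_n(\gamma)_n\tilde{f}_{m,n}|/|(\gamma)_n|$, which is legitimate since $\gamma\notin\{0,-1,-2,\ldots\}$; the denominator $|(\gamma)_n|$ is precisely the one appearing on the right-hand side of the target estimate. Now the same polynomial-with-nonnegative-coefficients observation exploited around (\ref{eq:pole-b-g}) applies twice: because the ordered indices $0\leq\ell_1\leq j_1<\ldots<\ell_m\leq j_m<n$ make the intervals $[\ell_s,j_s]$ disjoint subsets of $[0,n-1]$, both
\[
(\beta)_n\prod_{s=1}^m\frac{(\beta)_{\ell_s}}{(\beta)_{j_s+1}}\quad\text{and}\quad (\gamma)_n\prod_{s=1}^m\frac{(\gamma)_{\ell_s}}{(\gamma)_{j_s+1}}
\]
are polynomials (in $\beta$ and $\gamma$ respectively) with nonnegative coefficients, hence bounded in modulus by $(d)_n\prod_s(d)_{\ell_s}/(d)_{j_s+1}$ whenever $|\beta|,|\gamma|\leq d$. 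Combined with the elementary bounds $j_s!/\ell_s!\leq(d)_{j_s}/(d)_{\ell_s}$ and $|(\alpha)_{j_s}/(\alpha)_{\ell_s}|\leq(d)_{j_s}/(d)_{\ell_s}$ and the inequality $d+j_s\geq j_s+1$, the Pochhammer factors collapse and leave
\[
|(\beta)_n(\gamma)_n\tilde{f}_{m,n}|\leq(d)_n^{\,2}\sum_{0\leq\ell_1\leq j_1<\ldots<\ell_m\leq j_m<n}\,\prod_{s=1}^m\frac{k^{2(j_s-\ell_s)}}{(j_s+1)^2}.
\]

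Finally I would evaluate the nested sum. For each fixed $j_1<\ldots<j_m<n$ the $\ell$-sums decouple, and substituting $r_s=j_s-\ell_s\geq0$ gives $\sum_{\ell_s}k^{2(j_s-\ell_s)}\leq\sum_{r\geq0}k^{2r}=(1-k^2)^{-1}$, so together they contribute a factor $(1-k^2)^{-m}$. The remaining $j$-sum is handled by the standard symmetry inequality $\sum_{1\leq i_1<\ldots<i_m\leq n}\prod_s i_s^{-2}\leq(\pi^2/6)^m/m!$ via the change of variables $i_s=j_s+1$. Multiplying by $|w|^m$, summing over $m\geq0$ (the case $m=0$ being immediate from $\tilde{f}_{0,n}=1$ and $|(\beta)_n(\gamma)_n|\leq(d)_n^{\,2}$), and dividing by $|(\gamma)_n|$ produces exactly the claimed exponential factor $\exp(\pi^2|w|/(6(1-k^2)))$. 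The main obstacle is purely organisational: one must align the product so that the artificially introduced $(\gamma)_n$ pairs with the $(\gamma)$-denominators on the same footing as the natural $(\beta)_n$ prefactor pairs with the $(\beta)$-denominators; once this pairing is in place the remaining estimates reduce to routine geometric-series and zeta-type bounds.
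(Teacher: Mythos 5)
Your argument is correct and is precisely the routine adaptation of the proof of Proposition \ref{thm:estimate} that the paper alludes to but deliberately omits: you pair the artificial factor $(\gamma)_n$ (and the natural $(\beta)_n$) with the corresponding Pochhammer denominators via the nonnegative-coefficient observation around (\ref{eq:pole-b-g}), obtaining the bound $(d)_n^{\,2}\prod_s(j_s+1)^{-2}$, sum the geometric series in $\ell_s$ to extract $(1-k^2)^{-m}$, and then use $\sum_{1\le i_1<\dots<i_m\le n}\prod_s i_s^{-2}\le (\pi^2/6)^m/m!$. You also correctly identify the one genuine difference from Proposition \ref{thm:estimate} — here $w$ sits outside the product as $w^m$ rather than contributing a factor of order $j_s+1$ inside it, which is exactly why the harmonic sum is replaced by the convergent $\zeta(2)$ bound and the exponential factor becomes independent of $n$.
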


\end{document}